\documentclass[a4paper,12pt]{article}
\usepackage{amsfonts,amsthm,amsmath}
\newtheorem{thm}{Theorem}
\newtheorem{prop}[thm]{Proposition}

\newtheorem{cor}[thm]{Corollary}
\theoremstyle{remark}
\newtheorem{rem}[thm]{Remark}
\newcommand{\FF}{\mathbb{F}}
\newcommand{\ZZ}{\mathbb{Z}}
\newcommand{\RR}{\mathbb{R}}



\begin{document}

\title{Frames in the odd Leech lattice}

\author{
Tsuyoshi Miezaki\thanks{Department of Mathematics, 
Oita National College of Technology, 
1666 Oaza-Maki, Oita 870-0152, Japan. email: miezaki@oita-ct.ac.jp
}
}

\date{}

\maketitle

\begin{abstract}
In this paper, we show that 
there is a frame of norm $k$ in the 
odd Leech lattice for every $k\ge 3$. 
\end{abstract}

{\small
\noindent
{\bfseries Key Words:}
 odd Leech lattice, orthogonal frame, modular form. 

\noindent
2000 {\it Mathematics Subject Classification}. 
Primary 11H06; Secondary 11H71; Tertiary 11F11.\\ \quad
}

\section{Introduction}
Let $\Lambda$ be an $n$-dimensional (Euclidean) lattice 
in $\RR^{n}$. 
The dual lattice $\Lambda^*$ of $\Lambda$ is the lattice
$
\{x\in \RR^{n} \mid (x,y) \in\ZZ \text{ for all }
y\in \Lambda\}
$, 
where $(x, y)$ is the standard inner product.
A lattice $\Lambda$ with $\Lambda=\Lambda^{*}$
is called {\em unimodular}.
The norm of a vector $x$ is defined as $(x, x)$.
A unimodular lattice with even norms is said to be {\em even}, and that containing a vector of odd norm is said to be
{\em odd}.
An $n$-dimensional even unimodular lattice exists if and only
if $n \equiv 0 \pmod 8$, while an odd unimodular lattice
exists for every dimension. 
For example, the unique even unimodular lattice without roots 
in $24$ dimensions is the Leech lattice $\Lambda_{24}$ and 
the unique odd unimodular lattice without roots 
in $24$ dimensions is the odd Leech lattice $O_{24}$ \cite{SPLAG}. 
The minimum norm $\min(\Lambda)$ of $\Lambda$ is the smallest
norm among all nonzero vectors of $\Lambda$.



Let $\ZZ_{k}$ be the ring 
of integers modulo $k$, where $k$ 
is a positive integer. 
In this paper, we always assume that $k\geq 3$ and 
we take the set $\ZZ_{k}$ to be 
$\{0,1,\ldots,k-1\}$.
A $\ZZ_{k}$-code $C$ of length $n$
(or a code $C$ of length $n$ over $\ZZ_{k}$)
is a $\ZZ_{k}$-submodule of $\ZZ_{k}^n$.
A $\ZZ_{k}$-code $C$ is {\em self-dual} if $C=C^\perp$, where
the dual code $C^\perp$ of $C$ is defined as 
$C^\perp = \{ x \in \ZZ_{k}^n \mid x \cdot y = 0$ for all $y \in C\}$
under the standard inner product $x \cdot y=\sum_{i=1}^nx_iy_i$. 
The Euclidean weight of a codeword 
$x=(x_1,x_2,\ldots,x_n)$ is 
$\sum_{i=1}^n \min\{x_i^2,(k-x_i)^2\}$.
The minimum Euclidean weight $d_E(C)$ of $C$ is the smallest Euclidean
weight among all nonzero codewords of $C$.

We now give a method to construct 
unimodular lattices from self-dual $\ZZ_{k}$-codes which 
is referred to as Construction A~\cite{{BDHO},{HMV}}. 
Let $\rho$ be a map from $\ZZ_{k}$ to $\ZZ$ sending $0, 1, \ldots , k$ 
to $0, 1, \ldots , k$. 
If $C$ is a  self-dual $\ZZ_{k}$-code of length $n$, then 
the lattice 
\[
A_{k}(C)=\frac{1}{\sqrt{k}}\{\rho (C) +k \ZZ^{n}\} 
\]
is an $n$-dimensional unimodular lattice, where 
\[
\rho (C)=\{(\rho (c_{1}), \ldots , \rho (c_{n}))\ 
\vert\ (c_{1}, \ldots , c_{n}) \in C\}. 
\]
The minimum norm of $A_{k}(C)$ is $\min\{k, d_{E}(C)/k\}$.


A set $\{f_1, \ldots, f_{n}\}$ of $n$ vectors $f_1, \ldots, f_{n}$ in an
$n$-dimensional unimodular lattice $L$ with
$ ( f_i, f_j ) = k \delta_{i,j}$
is called a {\em $k$-frame} of $L$,
where $\delta_{i,j}$ is the Kronecker delta.
It is known that an even unimodular lattice $L$
contains a $k$-frame if and only if there exists 
a Type~II $\ZZ_{k}$-code $C$ 
such that $A_{k}(C) \simeq L$ \cite{{Chapman},{HMV}}. 

Chapman, Gulliver, and Harada showed that 
there exists a $2k$-frame in the 
Leech lattice for every $k\geq 2$ \cite{{Chapman},{GH01}}. 
This gives rise to a natural question, 
is there a $k$-frame in the odd Leech lattice? 
Harada and Kharaghani showed that there is a $k$-frame 
in the odd Leech lattice for $k\leq 97$ \cite{HK}. 
The aim of the present paper is to settle this problem: 
\begin{thm}\label{main1}
There exists an orthogonal frame of norm $k$ in the 
odd Leech lattice for every $k\geq 3$. 
\end{thm}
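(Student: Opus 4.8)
The plan is to translate the existence of a norm-$k$ frame into the language of self-dual codes via Construction~A, and then to reduce the whole problem to a minimum-weight condition that can be controlled for all $k$ simultaneously. First I would record the general correspondence behind Construction~A: an $n$-dimensional unimodular lattice $L$ contains a $k$-frame if and only if $L\simeq A_{k}(C)$ for some self-dual $\ZZ_{k}$-code $C$ of length $n$. One direction is immediate, since the vectors $\frac{1}{\sqrt{k}}(k e_{i})=\sqrt{k}\,e_{i}$ form a canonical $k$-frame of $A_{k}(C)$; for the converse, a $k$-frame of $L$ exhibits a sublattice isometric to $\sqrt{k}\,\ZZ^{n}$ whose quotient in $L$ is a self-dual $\ZZ_{k}$-code. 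This is the odd analogue of the even statement quoted from \cite{Chapman,HMV}.

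Specializing to $n=24$ and $L=O_{24}$, I would use the fact (stated in the Introduction) that $O_{24}$ is the \emph{unique} odd unimodular lattice of dimension $24$ without roots, i.e. with minimum norm at least $3$. Thus Theorem~\ref{main1} is equivalent to the purely coding-theoretic assertion that for every $k\ge 3$ there exists a self-dual $\ZZ_{k}$-code $C$ of length $24$ which is not Type~II and satisfies $d_{E}(C)\ge 3k$. Here the non-Type~II condition guarantees that $A_{k}(C)$ contains a vector of odd norm, hence is odd, while $d_{E}(C)\ge 3k$ together with $k\ge 3$ forces $\min\bigl(A_{k}(C)\bigr)=\min\{k,\,d_{E}(C)/k\}\ge 3$. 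Since an odd unimodular $24$-dimensional lattice with minimum norm at least $3$ must be $O_{24}$, any such code gives $A_{k}(C)\simeq O_{24}$ and therefore the desired frame.

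It then remains to produce, for every $k\ge 3$, a self-dual $\ZZ_{k}$-code of length $24$ that is odd and has $d_{E}(C)\ge 3k$. I would exhibit an explicit family defined uniformly in $k$, for instance by reducing modulo $k$ a fixed integral generator matrix (for example one attached to the structure of $O_{24}$, or a suitable double-circulant construction), so that self-duality holds for all $k$ and the failure of Type~II is a single congruence check on one generator, independent of $k$. The substantive point is the minimum Euclidean weight: one must exclude nonzero cosets of $k\ZZ^{24}$ that represent $k$ or $2k$ as $\sum_{i}\min\{x_{i}^{2},(k-x_{i})^{2}\}$, equivalently vectors of norm $1$ or $2$ in $A_{k}(C)$.

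This minimum-weight verification is where the modular form input enters and is the main obstacle. The theta series of $A_{k}(C)$ is a modular form of weight $12$, and the Euclidean weight enumerator of $C$ is tied to it through Construction~A. I would express the relevant counting quantity, the number of short vectors of $A_{k}(C)$ or equivalently the number of admissible frames, as a Fourier coefficient of such a modular form, decompose it into its Eisenstein part and its cuspidal part, and show that the Eisenstein main term grows while the cuspidal contribution is of smaller order, so that the coefficient forcing a root to appear vanishes (equivalently, the frame count is positive) once $k$ is large enough. The finitely many remaining moduli not covered by this asymptotic argument are exactly the range already treated by the computations of Harada and Kharaghani \cite{HK}, who settled $k\le 97$. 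Combining their small-$k$ computations with the modular-form argument for large $k$ produces a suitable code $C$, and hence an orthogonal frame of norm $k$ in $O_{24}$, for every $k\ge 3$.
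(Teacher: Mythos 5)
Your reduction of the theorem to coding theory is sound: the frame/Construction-A equivalence and the uniqueness of $O_{24}$ among rootless odd unimodular $24$-dimensional lattices do reduce the problem to producing, for each $k\ge 3$, an odd self-dual $\ZZ_k$-code $C$ of length $24$ with $d_E(C)\ge 3k$. But the heart of the proof --- actually producing such codes for every $k$ --- is missing, and the two devices you propose for it both fail. First, a family ``defined uniformly in $k$ by reducing a fixed integral generator matrix $(I_{12},A)$ modulo $k$'' cannot exist: self-duality of the reduction for every $k$ would force $I_{12}+AA^{T}\equiv 0 \pmod{k}$ for every $k$, i.e.\ $AA^{T}=-I_{12}$ over $\ZZ$, which is impossible since $AA^{T}$ is positive semidefinite. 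Second, your modular-form step is internally inconsistent: an Eisenstein-plus-cusp decomposition with a dominant main term proves that a Fourier coefficient is \emph{positive} for large index; it can never prove that the coefficient counting roots of $A_k(C)$ \emph{vanishes}, which is what $d_E(C)\ge 3k$ requires. You also assert without justification that the finitely many $k$ escaping the asymptotic argument coincide exactly with the range $k\le 97$ handled by Harada--Kharaghani; no effective threshold is derived.

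The paper avoids both problems by changing what is counted. It fixes one copy of $O_{24}$ once and for all, as $A_4(D)$ for an explicit $\ZZ_4$-code $D$ with generator matrix $(I_{12},S)$, $S$ skew-symmetric, and then writes down orthogonal frames inside that fixed lattice: the rows of $\left(\begin{smallmatrix} aI+bS & cI+dS\\ -cI+dS & aI-bS\end{smallmatrix}\right)$ lie in $O_{24}$ when $a\equiv d$ and $b\equiv c \pmod 4$, and they are pairwise orthogonal of common norm $k=(a^{2}+11b^{2}+c^{2}+11d^{2})/4$. The modular-form input is then a genuine positivity statement: the representation number $a(p)$ of this quaternary form with congruence conditions, controlled via an identity between its theta series and $\eta$-products together with Deligne's bound $|b(p)|<2\sqrt{p}$, is positive for every prime $p\ge 3$ with $p\neq 11$. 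The remaining moduli are handled by multiplicativity of frames (Lemma 5.1 of Chapman), the coefficient $a(4)>0$, and a single explicit self-dual $\ZZ_{11}$-code verified by computer. If you want to rescue your outline, this is the reorientation you need: count frames inside one fixed copy of $O_{24}$, rather than trying to certify minimum Euclidean weights of a $k$-varying family of codes.
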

\begin{rem}
The even sublattice of the odd Leech lattice 
has index two in the Leech lattice. 
Therefore, Theorem \ref{main1} generalizes the results of 
Chapman, Gulliver, and Harada \cite{{Chapman},{GH01}}. 
\end{rem}
All computer calculations in this paper
were done using {\sc Magma}~\cite{Magma}.

\section{Proof of Theorem \ref{main1}}
\subsection{$k$-frame with $k\neq 11$}
We review the construction of the Leech lattice due to McKay. 
For the details we refer to \cite{Chapman}. 
Let
{\small
\[
S=\left(\begin{array}{ccccccccccccc}
 0&1&1&1&1&1&1&1&1&1&1&1\\
-1&0&1&-1&1&1&1&-1&-1&-1&1&-1\\
-1&-1&0&1&-1&1&1&1&-1&-1&-1&1\\
-1&1&-1&0&1&-1&1&1&1&-1&-1&-1\\
-1&-1&1&-1&0&1&-1&1&1&1&-1&-1\\
-1&-1&-1&1&-1&0&1&-1&1&1&1&-1\\
-1&-1&-1&-1&1&-1&0&1&-1&1&1&1\\
-1&1&-1&-1&-1&1&-1&0&1&-1&1&1\\
-1&1&1&-1&-1&-1&1&-1&0&1&-1&1\\
-1&1&1&1&-1&-1&-1&1&-1&0&1&-1\\
-1&-1&1&1&1&-1&-1&-1&1&-1&0&1\\
-1&1&-1&1&1&1&-1&-1&-1&1&-1&0
\end{array}\right)_.
\]
}
Let $C$ be the linear code of length $24$ over $\ZZ_4$ 
with generator matrix $M=(I_{12}, 2I_{12}+S)$, 
where $I_n$ is the identity matrix of size $n$. 
Then $A_4(C)\simeq \Lambda_{24}$ \cite{Chapman}. 
Let $D$ be the linear code of length $24$ over $\ZZ_4$ 
with generator matrix $N=(I_{12}, S)$. 
Then we have the following proposition: 
\begin{prop}
$A_4(D)\simeq O_{24}$. 
\end{prop}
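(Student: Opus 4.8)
The plan is to verify, for $A_4(D)$, the three defining properties of the odd Leech lattice recalled in the introduction — unimodular, odd, and without roots in dimension $24$ — and then invoke the uniqueness of $O_{24}$. The three properties will follow from the self-duality of $D$, an explicit short vector, and one minimum-weight computation.

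First I would establish that $D$ is self-dual, which by Construction~A guarantees that $A_4(D)$ is a $24$-dimensional unimodular lattice. Since $N=(I_{12},S)$ is in standard form, $D$ is free of rank $12$, so $|D|=4^{12}$ and it suffices to check $D\subseteq D^{\perp}$, i.e. $I_{12}+SS^{T}\equiv 0 \pmod 4$. The key observation is that $S$ is skew-symmetric, $S^{T}=-S$, so this condition reads $S^{2}\equiv I_{12}\pmod 4$. The same reduction applied to $C$, whose generator is $(I_{12},2I_{12}+S)$, gives $(2I_{12}+S)(2I_{12}+S)^{T}\equiv 4I_{12}-S^{2}\equiv -S^{2}\pmod 4$; since $C$ is self-dual (as $A_4(C)\simeq\Lambda_{24}$), we already know $S^{2}\equiv I_{12}\pmod 4$. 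Thus self-duality of $D$ comes for free from self-duality of $C$ together with the skew-symmetry of $S$, and $A_4(D)$ is unimodular.

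Next I would pin down the minimum norm, which simultaneously settles oddness and the absence of roots. By the norm formula for Construction~A, $\min(A_4(D))=\min\{4,d_E(D)/4\}$, so everything reduces to showing $d_E(D)=12$. For the upper bound I would exhibit a single explicit codeword: taking the first generator row, $a=(1,0,\ldots,0)$, yields the codeword with a single $1$ in the first block and the first row $(0,1,\ldots,1)$ of $S$ in the second block, hence twelve coordinates equal to $1$ and Euclidean weight $12$. The associated lattice vector $\tfrac12\rho(\,\cdot\,)$ has norm $3$; since $3$ is odd this already proves that $A_4(D)$ is odd, and it gives $d_E(D)\le 12$.

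The remaining and main point is the lower bound $d_E(D)\ge 12$, i.e. that $D$ has no nonzero codeword of Euclidean weight less than $12$; equivalently, $A_4(D)$ has no vector of norm $1$ or $2$. I expect this to be the real obstacle, and I would settle it by a direct minimum-weight computation over $\ZZ_4$ in {\sc Magma}, as is done elsewhere in the paper. A purely theoretical argument seems awkward: passing from $C$ to $D$ replaces the second block $2a+aS$ by $aS$, which alters the Euclidean contributions only at the second-block coordinates indexed by the odd entries of $a$ (each such contribution changing by $0$ or $\pm4$), and this twist does not carry short codewords of one code to short codewords of the other in a controlled way, so the bound $d_E(C)=16$ for the Leech lattice should not transfer cleanly. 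Once $d_E(D)=12$ is confirmed, $A_4(D)$ is a $24$-dimensional odd unimodular lattice of minimum norm $3$, hence without roots, and the uniqueness statement recalled in the introduction gives $A_4(D)\simeq O_{24}$.
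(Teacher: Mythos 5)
Your proposal is correct and takes essentially the same route as the paper: establish self-duality of $D$ (the paper simply computes $NN^{T}=12I_{12}$ directly, rather than inheriting $S^{2}\equiv I_{12}\pmod 4$ from $C$), then rely on a {\sc Magma} computation to rule out vectors of norm $1$ and $2$, and conclude by uniqueness of $O_{24}$. Your explicit norm-$3$ vector, which pins down oddness and hence excludes $\Lambda_{24}$, is a small point of extra care that the paper's proof leaves implicit.
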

\begin{proof}
Since 
\[
NN^{T}=12I_{12}, 
\]
and $|D|=4^{12}$, 
$D$ is a self-dual code, namely, 
$A_4(D)$ is a unimodular lattice. 
Then using {\sc Magma}, we have verified that $A_4(D)$ has no roots, 
which implies that $A_4(D)$ is the odd Leech lattice. 
\end{proof}
Since $S^T=-S$ and $D$ is self-dual, 
$(S, I_{12})$ is also a generator matrix 
for $D$. Then for $a\equiv d\pmod{4}$ and $b\equiv c\pmod{4}$, 
the rows of the following matrix, 
\[P=
\begin{pmatrix}
aI+bS&cI+dS\\
-cI+dS&aI-bS
\end{pmatrix},
\]
are in $O_{24}$. Moreover, 
since $PP^T=(a^2+11b^2+c^2+11d^2)I_{24}$, 
the odd Leech lattice contains a $k$-frame of 
$24$ vectors, where $k=(a^2+11b^2+c^2+11d^2)/4$. 

\begin{thm}\label{thm:knot11}
For each odd prime $p\neq 11$, there exists $a,\ b,\ c,\ d\in \ZZ$ with 
$a\equiv d\pmod{4}$ and $b\equiv c\pmod{4}$, where 
$p=(a^2+11b^2+c^2+11d^2)/4$. 
\end{thm}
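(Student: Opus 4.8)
The plan is to read the statement as an assertion about which integers are represented by a fixed positive definite quaternary quadratic form, and then to bring in the theory of weight-$2$ modular forms. Write $Q(a,b,c,d)=a^2+c^2+11b^2+11d^2$, and let $L\subset\ZZ^4$ be the sublattice cut out by the congruences $a\equiv d$ and $b\equiv c\pmod 4$; this $L$ has index $16$ in $\ZZ^4$. What must be shown is precisely that $Q$ takes the value $4p$ on $L$ for every odd prime $p\neq 11$, i.e. that the representation number $r_L(4p)=\#\{v\in L:Q(v)=4p\}$ is nonzero.

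First I would encode the $r_L(n)$ as Fourier coefficients. The theta series $\theta_L(\tau)=\sum_{v\in L}e^{2\pi i\tau Q(v)}$ is a modular form of weight $2$ on $\Gamma_0(N)$ with a quadratic character, where the level $N$ is divisible by $11$ and by a power of $2$ coming from the index-$16$ congruence conditions; a short computation with the Gram matrix of $Q|_L$ pins $N$ down. I would then split $\theta_L=E+g$ into its Eisenstein part $E$ and cusp part $g$, so that $r_L(4p)=a_E(4p)+a_g(4p)$.

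The Eisenstein coefficient is the genus (Siegel--Weil) main term: it equals a positive constant times a product of local densities, and for a quaternary form these densities combine so that $a_E(4p)$ is bounded below by a positive multiple of $p$, provided $4p$ is represented by $Q|_L$ over every $\ZZ_\ell$. The exclusions are exactly what local analysis predicts: $p=2$ is dropped because we treat only odd primes, while $p=11$ is the ramified prime of the binary form $x^2+11y^2$ and must be handled separately (as the section heading anticipates). Verifying this local solvability for all odd $p\neq 11$, and hence that $a_E(4p)\gg p$, is the first technical point.

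For the cusp part, Deligne's bound for weight-$2$ cusp forms gives $|a_g(4p)|\ll p^{1/2+\ep}$, with an effective constant obtainable from an explicit basis of the (finite-dimensional) space of cusp forms at this level. Comparing the linear lower bound for $a_E(4p)$ against this $O(p^{1/2+\ep})$ upper bound for $a_g(4p)$ yields an explicit threshold $p_0$ beyond which $r_L(4p)>0$ automatically. The remaining finitely many primes $p\le p_0$ are then checked directly with {\sc Magma}. I expect the main obstacle to be making the cusp bound effective together with the local solvability at $2$ and $11$: once the constant in the cusp estimate and the threshold $p_0$ are explicit, the rest is either the soft Eisenstein positivity or a finite machine computation.
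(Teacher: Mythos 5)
Your proposal is correct in outline, and its skeleton --- representation numbers as Fourier coefficients of a weight-$2$ theta series, an Eisenstein/cusp decomposition, and Deligne's bound to show the cusp part cannot cancel the main term --- is exactly the paper's strategy; where you genuinely diverge is in how the two pieces are controlled, and the difference is instructive. The paper never invokes Siegel--Weil local densities, effective constants for a whole cusp space, or a threshold-plus-finite-check over primes. Instead it identifies both pieces \emph{exactly}: writing $\chi_2,\chi_{11}$ for the indicator functions of integers coprime to $2$ and $11$, it proves the identity
\[
\theta_L(z)_{\chi_2}
=\left(\frac{4}{5}\left(\frac{\eta(4z)^8}{\eta(2z)^4}-\eta(z)^2\eta(11z)^2\right)\right)_{\chi_{11}},
\]
in which $\eta(4z)^8/\eta(2z)^4=\sum_{n\ge1}\sigma_1(2n-1)q^{2n-1}$ is the Eisenstein piece and $\eta(z)^2\eta(11z)^2$ is the unique newform of weight $2$ for $\Gamma_0(11)$. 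The identity is established by a Sturm-type finite coefficient check (coefficients up to $1388$, twice the genus of $\Gamma_0(5324)$, via Murty's theorem on congruences between modular forms), and it immediately yields the exact formula $a(p)=\frac{4}{5}(p+1-b(p))$ for odd primes $p\neq 11$, hence $a(p)>\frac{4}{5}(\sqrt{p}-1)^2>0$ for \emph{every} such prime by the Hasse--Deligne bound $|b(p)|<2\sqrt{p}$. So the three technical burdens you flag --- local solvability at $2$ and $11$, effectivity of the cusp-form constant, and the residual computation for $p\le p_0$ --- are precisely what the explicit identification avoids; conversely, your generic route does not depend on the fortunate accident that makes the paper's identification so clean, namely that the cuspidal contribution here is a scalar multiple of a single explicit newform (the cusp space of level $11$ is one-dimensional), so your argument would adapt to forms where no such tidy eta-quotient identity exists, at the cost of a possibly sizable finite verification. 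One small correction: since the Gram matrix of the quarter-form has determinant $11^2$, a perfect square, the nebentypus of $\theta_L$ is trivial (it lies in $M_2(\Gamma_0(44))$ with trivial character), not a nontrivial quadratic character as you suggest.
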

\begin{proof}
Let 
$L=\{(a, b, c, d)\in\ZZ\mid a\equiv d\pmod{4} \mbox{ and } b\equiv c\pmod{4}\}$
be the $4$-dimensional lattice with the inner product $\langle ,\rangle$ 
induced by $(a^2+11b^2+c^2+11d^2)/4$. This lattice is spanned by 
$(4,0,0,0)$, $(0,4,0,0)$, $(1,0,0,1)$, and $(0,1,1,0)$, and 
the Gram matrix is as follows: 
\[M:=
\begin{pmatrix}
 4& 0& 1& 0\\
 0& 44& 0& 11\\
 1& 0& 3& 0\\
 0& 11& 0& 3
\end{pmatrix}_.
\]
Then the theta series of $L$, 
\begin{align}
&\theta_L(z)=\sum_{x\in L}q^{\langle x,x\rangle}=\sum_{n=0}^{\infty}a(n)q^n \nonumber\\
&=1 + 4 q^3 + 4 q^4 + 4 q^5 + 4 q^6 + 8 q^7 + 12 q^8 + 12 q^9 +4 q^{10} 
+ 16 q^{12} + 8 q^{13} + \cdots \label{eqn:theta},
\end{align}
is a modular form for $\Gamma_0(44)$, 
because $11M^{-1}$ has integer entries and 
$\det M=11^2$ {\cite[page 192]{Miyake}}. 
Let 
\begin{align*}
\eta(z)=q^{\frac{1}{24}}\prod_{n=1}^{\infty}(1-q^n)
\end{align*}
be the Dedekind $\eta$-function. 
Then $\eta(4z)^8/\eta(2z)^4$ (resp. $\eta(z)^2\eta(11z)^2$) 
is a modular form for $\Gamma_0(4)$ (resp. $\Gamma_0(11)$) 
{\cite[page 145 \& 130]{Kob}}. 
Moreover, by \cite[page 145]{Kob}, we have 
\[
\frac{\eta(4z)^8}{\eta(2z)^4}=\sum_{n=1}^{\infty}\sigma_1(2n-1)q^{2n-1}, 
\]
where $\sigma_1(n)=\sum_{p|n}p$. 
Let $\eta(z)^2\eta(11z)^2=\sum_{n=1}^{\infty}b(n)q^n$ and 
\[
\chi_p(n)=
\left\{
\begin{array}{cl}
0 & \mbox{ if }n\equiv 0\pmod{p}\\
1 & \mbox{ otherwise. }
\end{array}
\right.
\]
Then 
\[
\theta_L(z)_{\chi_2}=\sum_{n=0}^{\infty}\chi_{2}(n)a(n)q^n
=4 q^3 + 4 q^5 + 8 q^7 + 12 q^9 
+ 8 q^{13} + 20 q^{15} + 16 q^{17} + \cdots
\] 
is a modular form for 
$\Gamma_0(176)$~{\cite[page~127]{Kob}}. 
On the other hand, the function 
\[
\frac{4}{5}\left(\frac{\eta(4z)^8}{\eta(2z)^4}-\eta(z)^2\eta(11z)^2\right)
=\sum_{n=1}^{\infty}(\sigma_1(n)-b(n))q^n 
\]
is a modular form for $\Gamma_0(44)$. Then, 
\begin{align*}
\left(\frac{4}{5}
\left(\frac{\eta(4z)^8}{\eta(2z)^4}-\eta(z)^2\eta(11z)^2
\right)
\right)_{\chi_{11}}
&=\sum_{n=1}^{\infty}\chi_{11}(n)(\sigma_1(n)-b(n))q^n\\ 
&=4 q^3 + 4 q^5 + 8 q^7 + 12 q^9 
+ 8 q^{13} + \cdots
\end{align*}
is a modular form for $\Gamma_0(5324)$~{\cite[page~127]{Kob}}. 
Using Theorem 7 in \cite{Murty} and the fact that 
the genus of $\Gamma_0(5324)$ is $694$, and 
checking that for $n\leq 1388$, 
\[
\chi_{2}(n)a(n)=\chi_{11}(n)(\sigma_1(n)-b(n)), 
\]
we have 
\[
\theta_L(z)_{\chi_2}={\left(\frac{4}{5}
\left(\frac{\eta(4z)^8}{\eta(2z)^4}-\eta(z)^2\eta(11z)^2
\right)
\right)_{\chi_{11}}}_, 
\]
namely, for prime $p$ with $p\geq 3$, 
\[
a(p)=\frac{4}{5}(\sigma_1(p)-b(p))=\frac{4}{5}(p+1-b(p)). 
\]
It is known that 
\begin{equation}
|b(p)|<2\sqrt{p} \label{eqn:Deligne}
\end{equation}
for all primes $p$~{\cite[page 297]{Chapman}}. 
Thus, we have 
\[
a(p)=\frac{4}{5}(p+1-b(p))>\frac{4}{5}(p+1-2\sqrt{p})
=\frac{4}{5}(\sqrt{p}-1)^2>0.
\]
\end{proof}
By Theorem \ref{thm:knot11} and Lemma 5.1 in \cite{Chapman}, 
the odd Leech lattice contains a $k$-frame 
if $k\geq 3$, $k\neq 4$, and $k\neq 11m$ for all integers $m\geq 1$. 
However, because the Fourier coefficient $a(4)$ is positive, 
there exists a $4$-frame in the odd Leech lattice. 
Therefore, 
the odd Leech lattice contains a $k$-frame 
if $k\geq 3$, and $k\neq 11m$ for all integers $m\geq 1$.

\subsection{$11$-frame}
Using the quasi-twisted construction \cite{GH01}, 
we have found a $\ZZ_{11}$-code $C_{11}$ of length $24$ 
which has generator matrix $(I_{12}, A)$, where 
{\small
\[
A=\left(\begin{array}{ccccccccccccc}
 2&2&2&10&4&9&7&1&1&1&1&1\\
 10&2&2&2&10&4&9&7&1&1&1&1\\
 10&10&2&2&2&10&4&9&7&1&1&1\\
 10&10&10&2&2&2&10&4&9&7&1&1\\
 10&10&10&10&2&2&2&10&4&9&7&1\\
 10&10&10&10&10&2&2&2&10&4&9&7\\
 4&10&10&10&10&10&2&2&2&10&4&9\\
 2&4&10&10&10&10&10&2&2&2&10&4\\
 7&2&4&10&10&10&10&10&2&2&2&10\\
 1&7&2&4&10&10&10&10&10&2&2&2\\
 9&1&7&2&4&10&10&10&10&10&2&2\\
 9&9&1&7&2&4&10&10&10&10&10&2
\end{array}\right)_.
\]
}
Since $AA^{T}=10I_{12}$, $C_{11}$ is self-dual and 
$A_{11}(C_{11})$ is unimodular. 
Moreover, we have verified using {\sc Magma} that 
$A_{11}(C_{11})$ has minimum norm $3$, which implies 
that $A_{11}(C_{11})$ is the odd Leech lattice. 
The code $C_{11}$ gives an orthogonal frame of norm $11$ 
in the odd Leech lattice by Construction A because 
$A_{11}(C_{11})$ contains the vectors 
$(\sqrt{11},0,\ldots,0)$, $(0,\sqrt{11},0,\ldots,0)$,\ldots,
$(0,\ldots,0,\sqrt{11})$. 
By Lemma 5.1 in \cite{Chapman}, the above frame gives an orthogonal frame 
of norm $11m$ for every $m$. 

Summarizing this section, we proved Theorem \ref{main1}. 
\begin{rem}
A similar argument can be found in \cite{{Chapman},{GH01}} 
for the Leech lattice. 
\end{rem}
Since the existences of an orthogonal frame of norm $k$ in the odd Leech 
lattice and a self-dual $\ZZ_{k}$-code of length $24$ 
are equivalent \cite{{Chapman},{HMV}}, we have the following: 
\begin{cor}
The odd Leech lattice can be constructed using some self-dual 
$\ZZ_{k}$-code by Construction A for any $k\geq 3$. 
\end{cor}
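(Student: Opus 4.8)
The plan is to deduce the corollary directly from Theorem \ref{main1} by invoking the dictionary between orthogonal frames and self-dual codes supplied by Construction A. First I would recall the precise form of this correspondence. If $C$ is a self-dual $\ZZ_k$-code of length $24$, then the images under Construction A of the rescaled standard basis vectors, namely $\sqrt{k}\,e_1,\ldots,\sqrt{k}\,e_{24}$, all lie in $A_k(C)$ and satisfy $(\sqrt{k}\,e_i,\sqrt{k}\,e_j)=k\delta_{i,j}$; hence they form a $k$-frame of $A_k(C)$. Conversely, given a $k$-frame $\{f_1,\ldots,f_{24}\}$ of a $24$-dimensional unimodular lattice $L$, the sublattice $\sqrt{k}\ZZ^{24}=\langle f_1,\ldots,f_{24}\rangle$ is contained in $L$ with finite index, and reducing the coordinates of $L$ (expressed in the frame basis) modulo $k$ produces a $\ZZ_k$-code $C$; unimodularity of $L$ forces $C$ to be self-dual and $L=A_k(C)$. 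This is exactly the equivalence recorded in \cite{Chapman, HMV}.

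With this in hand, the argument is short. By Theorem \ref{main1}, for every $k\geq 3$ the odd Leech lattice $O_{24}$ possesses an orthogonal frame of norm $k$. Applying the frame-to-code direction above to such a frame yields a self-dual $\ZZ_k$-code $C$ of length $24$ with $A_k(C)\simeq O_{24}$. Since $k\geq 3$ was arbitrary, this establishes the corollary.

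The heavy lifting has already been carried out in proving Theorem \ref{main1}: the modular-forms estimate that forces $a(p)>0$ for every odd prime $p\neq 11$, the explicit $11$-frame coming from the $\ZZ_{11}$-code $C_{11}$, and the multiplicativity of frames under Lemma 5.1 of \cite{Chapman}. Consequently, the only point requiring care in the corollary itself is to verify that the equivalence transports the \emph{specific} lattice $O_{24}$, rather than merely producing some abstract unimodular lattice: one must confirm that the code obtained from an $O_{24}$-frame reconstructs $O_{24}$ under Construction A, which is guaranteed because the frame-to-code and code-to-lattice passages are mutually inverse on the isomorphism class. I expect no genuine obstacle here; the statement is essentially a reformulation of Theorem \ref{main1} in the language of codes.
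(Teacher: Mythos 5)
Your proposal is correct and follows exactly the paper's own route: the paper proves this corollary in one sentence by combining Theorem \ref{main1} with the frame--code equivalence cited from \cite{Chapman,HMV}, which is precisely the dictionary you recall and apply. Your extra sketch of why the equivalence holds (reducing frame coordinates modulo $k$, self-duality from unimodularity) is a faithful expansion of the cited result rather than a different argument.
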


\begin{rem}
Harada and Kharaghani constructed, via Construction A, the odd Leech lattice 
using self-dual codes over $\FF_p$ for $p\leq 97$ \cite{HK}. 
\end{rem}

\bigskip
\noindent
{\bf Acknowledgment.}
The author would like to thank Masaaki Harada for
useful discussions.


\end{document}